\newtheorem{satz}{Theorem}[section]
\newtheorem{cor}[satz]{Corollary}
\newtheorem{bem}[satz]{Observation}
\newcommand{\R}{\ensuremath{{\mathbb R}}}
\newcommand{\Pv}{\mathbb{P}}
\newcommand{\Ex}{\mathbb{E}}
\newcommand{\abs}[1]{\left\lvert#1 \right\rvert}
\newcommand{\norm}[1]{\left \lVert#1 \right\rVert}
\begin{document}

\title{On the Maximum of Random Variables on Product Spaces}
\author{Joscha Prochno \and Stiene Riemer}
\date{\today}
\maketitle

\begin{abstract} 
Let $\xi_i$, $i=1,...,n$, and $\eta_j$, $j=1,...,m$ be iid p-stable respectively q-stable random variables, $1< p < q<2$. We prove estimates for $\Ex_{\Omega_1} \Ex_{\Omega_2}\max_{i,j}\abs{a_{ij}\xi_i(\omega_1)\eta_j(\omega_2)}$ in terms of the $\ell_p^m(\ell_q^n)$-norm of $(a_{ij})_{i,j}$. Additionally, for p-stable and standard gaussian random variables we prove estimates in terms of the $\ell_p^m(\ell_{M_{\xi}}^n)$-norm, $M_{\xi}$ depending on the Gaussians. Furthermore, we show that a sequence $\xi_i$, $i=1,\ldots,n$ of iid $\log-\gamma(1,p)$ distributed random variables ($p\geq 2$) generates a truncated $\ell_p$-norm, especially $\Ex \max_{i}\abs{a_i\xi_i}\sim \norm{(a_i)_i}_2$ for $p=2$. As far as we know, the generating distribution for $\ell_p$-norms with $p\geq 2$ has not been known up to now.
\end{abstract}

\textbf{Keywords:} {Random variables, Orlicz norms}\\[.2cm]

\section{Introduction and Notation}\label{intro} Let $\xi_i$, $i=1,...,n$ be independent copies of a random variable $\xi$ on a probability space $(\Omega_1,\mathfrak{A}_1,\Pv_1)$, whose first moment is finite. Furthermore, let $a_i$, $i=1,...,n$ be real numbers. In \cite{carsten2} and \cite{carsten1}, the following theorem was shown:

\begin{satz}\label{thm:orlicz_carsten1} Let
	\begin{equation}\label{eqn:m1}
	  M_{\xi}(s) = \int_0^s \frac{1}{t} \Pv_1\left(|\xi| \geq \frac{1}{t}\right) + \int_{\frac{1}{t}}^{\infty} \Pv_1(|\xi| \geq u) du ~ dt.
	\end{equation}
	Then, for all $x\in\R^n$,
$$\Ex\max\limits_{i=1,...,n}|a_i\xi_i|\sim \norm{(a_i)_{i=1}^n}_{M_{\xi}}.$$
\end{satz}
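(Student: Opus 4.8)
The plan is to reduce both sides of the asserted equivalence to one and the same integral built from the tail $F(u):=\Pv(\abs{\xi}\ge u)$, and then match them. First I would use the layer-cake formula together with independence of the $\xi_i$ to write
$$\Ex\max_{i}\abs{a_i\xi_i}=\int_0^\infty \Pv\!\Big(\max_i\abs{a_i\xi_i}>t\Big)\,dt=\int_0^\infty\Big(1-\prod_{i=1}^n\big(1-F(t/\abs{a_i})\big)\Big)\,dt.$$
The elementary two-sided estimate $1-\prod_i(1-p_i)\sim\min\big(1,\sum_i p_i\big)$ (union bound for the upper bound; $1-\prod_i(1-p_i)\ge 1-e^{-\sum_i p_i}\ge(1-e^{-1})\min(1,\sum_i p_i)$ for the lower) then gives
$$\Ex\max_i\abs{a_i\xi_i}\sim\int_0^\infty\min\big(1,S(t)\big)\,dt=:I(a),\qquad S(t):=\sum_{i=1}^n F(t/\abs{a_i}).$$

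Next I would bring the Orlicz side into the same shape. A short computation (Fubini, or integration by parts) collapses the double integral defining $M_\xi$ in \eqref{eqn:m1} to the closed form
$$M_\xi(s)=s\int_{1/s}^\infty F(u)\,du,$$
from which one checks that $M_\xi$ is a genuine Orlicz function: $M_\xi(0)=0$; it is convex since $M_\xi'(s)=\tfrac1s F(1/s)+\int_{1/s}^\infty F(u)\,du$ is increasing; and $M_\xi(s)\sim s\,\Ex\abs{\xi}\to\infty$ as $s\to\infty$. The Luxemburg norm $\lambda:=\norm{(a_i)}_{M_\xi}$ is the unique solution of $\sum_i M_\xi(\abs{a_i}/\lambda)=1$, and inserting the closed form together with the change of variables $t=\abs{a_i}u$ turns this defining relation into the much simpler balance condition $\int_\lambda^\infty S(t)\,dt=\lambda$, expressed through the very same function $S$.

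It then remains to compare $I(a)$ with $\lambda$. Writing $t^*$ for the unique point with $S(t^*)=1$ (recall $S$ is decreasing), one has $I(a)=t^*+\int_{t^*}^\infty S(t)\,dt$. I would split into the cases $\lambda\ge t^*$ and $\lambda<t^*$, using only monotonicity of $S$ and the balance condition $\int_\lambda^\infty S=\lambda$. In the first case $S\le 1$ on $[t^*,\lambda]$ yields $I(a)=t^*+\int_{t^*}^\lambda S+\int_\lambda^\infty S\le t^*+(\lambda-t^*)+\lambda=2\lambda$, while $\lambda=\int_\lambda^\infty S\le\int_{t^*}^\infty S\le I(a)$; in the second case $S\ge 1$ on $[\lambda,t^*]$ gives $\lambda\ge(t^*-\lambda)+\int_{t^*}^\infty S$, hence $I(a)=t^*+\int_{t^*}^\infty S\le 2\lambda$, while $I(a)\ge t^*>\lambda$. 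Either way $\lambda\le I(a)\le 2\lambda$, and combined with the first step this proves $\Ex\max_i\abs{a_i\xi_i}\sim\norm{(a_i)}_{M_\xi}$.

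The two productive steps are the reduction $1-\prod_i(1-p_i)\sim\min(1,\sum_i p_i)$ and the closed form for $M_\xi$; once both quantities are written through $S$ and $F$, the final comparison is soft. The main obstacle is therefore pinning down the right normalization — recognizing that $M_\xi$ was engineered precisely so that $\sum_i M_\xi(\abs{a_i}/\lambda)=1$ becomes $\int_\lambda^\infty S=\lambda$. This is exactly what forces the threshold $S(t^*)=1$ governing $I(a)$ and the norm $\lambda$ to interact through the same integrand, and lets the elementary monotonicity argument close the estimate with explicit constants.
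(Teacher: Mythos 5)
Your argument is correct, and it is worth noting that the paper itself offers no proof of this statement: Theorem \ref{thm:orlicz_carsten1} is imported verbatim from \cite{carsten2} and \cite{carsten1}, so there is no internal proof to compare against. Your route --- layer cake plus the two-sided bound $1-\prod_i(1-p_i)\sim\min(1,\sum_i p_i)$, the collapse of the double integral to $M_\xi(s)=s\int_{1/s}^{\infty}F(u)\,du$ with $F(u)=\Pv(\abs{\xi}\ge u)$, and the translation of the Luxemburg normalization into $\int_\lambda^\infty S(t)\,dt=\lambda$ --- is the standard direct derivation and is in the same spirit as the original Gordon--Litvak--Sch\"utt--Werner argument; all the key computations check out (I verified the Fubini collapse and the change of variables in the balance condition), and the final monotonicity comparison gives the clean constants $(1-e^{-1})\lambda\le\Ex\max_i\abs{a_i\xi_i}\le 2\lambda$.

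Three small points deserve more care. First, you assert that $M_\xi'(s)=\tfrac1s F(1/s)+\int_{1/s}^\infty F(u)\,du$ is increasing, but the term $\tfrac1s F(1/s)$ alone need not be increasing in $s$ (take $\xi\equiv 1$); monotonicity of the sum does hold, via $\int_{1/t_2}^{1/t_1}F\ge F(1/t_1)(1/t_1-1/t_2)$, which exactly compensates the possible decrease of the first term --- this one-line cancellation should be written out, since convexity of $M_\xi$ is what makes $\norm{\cdot}_{M_\xi}$ a norm at all. Second, $S$ is nonincreasing but may jump over the level $1$ or never reach it, so ``the unique point with $S(t^*)=1$'' should be replaced by $t^*:=\sup\{t:S(t)>1\}$ (with $t^*=0$ if the set is empty); your case analysis only uses $S\ge 1$ on $(0,t^*)$ and $S\le 1$ on $(t^*,\infty)$, so it survives unchanged. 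Third, the identity $\sum_i M_\xi(\abs{a_i}/\lambda)=1$ at $\lambda=\norm{(a_i)}_{M_\xi}$ requires $M_\xi$ to be continuous and unbounded; both follow from $0<\Ex\abs{\xi}<\infty$ (Markov gives $\tfrac1t F(1/t)\le\Ex\abs{\xi}$, so the integrand defining $M_\xi$ is bounded, and $M_\xi(s)\to\infty$), and the degenerate cases $\xi\equiv 0$ or $a=0$ should simply be excluded. With these repairs the proof is complete and self-contained.
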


We recall that a convex function $M:[0,\infty)\rightarrow[0,\infty)$ with $M(0)=0$ is called an Orlicz function. For an Orlicz function $M$ we define the Orlicz norm $\norm{\cdot}_M$ on $\R^n$ by
	$$\norm{x}_M=\inf\left\{t>0\left|\sum\limits_{i=1}\limits^nM\left(\frac{|x_i|}{t}\right)\leq 1\right.\right\},$$
and the Orlicz space $\ell_M^n$ to be $\R^n$ equipped with the norm $\norm{\cdot}_M$. For references see for example \cite{orlicz}.

In the following let also $n_j$, $j=1,...,m$ be independent copies of a random variable $\eta$ on a probability space $(\Omega_2,\mathfrak{A}_2,\Pv_2)$, whose first moment is finite and $a_{ij}$, $i=1,...,n$, $j=1,...,m$ be real numbers. It is a natural question if we can give estimates for 
\begin{equation}\label{eqn:ausdruck}
\Ex_{\Omega_1}\Ex_{\Omega_2}\max\limits_{i,j}\left|a_{ij}\eta_j(\omega_2)\xi_i({\omega_1})\right|.
\end{equation}
Since the random variables $\left(\xi_i\eta_j\right)_{i,j=1}^{n,m}$ are no longer independent on the product space $(\Omega_1\times\Omega_2, \mathfrak{A}_1\otimes\mathfrak{A}_2,\Pv_1\otimes \Pv_2)$ the previous result, Theorem \ref{thm:orlicz_carsten1}, is not applicable in this case.\\
We give precise estimates up to absolute constants for a certain class of random variables, namely $p$- and $q$-stable, $p,q\in (1,2), p<q$, and standard gaussians. This shows in addition that we can treat dependent random variables with a certain structure of dependence and give precise estimates, this has not been feasible at all by now. Considering p-stable random variables seems to be natural in this case, since they generate the $\ell_p$-norm, that means the Orlicz function resulting in Theorem \ref{thm:orlicz_carsten1} equals $s\mapsto s^p$ for $p\in (1,2)$. One would expect, that the standard gaussians generate the $\ell_2$-norm, but in fact, as shown for example in \cite{carsten1}, they do not, but we can treat them as well. These estimates can be found in the second section. For applications we refer the reader to \cite{carsten3}, \cite{carsten2} and \cite{carsten1}.\\ 
Furthermore, in this context the question arose which random variables generate the $\ell_2$-norm, since standard gaussians astonishingly do not. We provide the solution together with the solution of the generation of truncated $\ell_p$-norms ($p>2$) in the third section. Additionally, we give order estimates for (\ref{eqn:ausdruck}) for these generating distributions.\\
${}$\\
In the following we will give order estimates and this will be denoted by $\sim$, since we are not interested in the exact values of the absolute constants. If for example the absolute constants depend on a certain variable $p$ we denote this by $\sim_p$.

\section{Estimates for $\Ex_{\Omega_1}\Ex_{\Omega_2}\max\limits_{i,j}\left|a_{ij}\eta_j(\omega_2)\xi_i({\omega_1})\right|$}

In the following let $p,q\in(1,2)$ with $p<q$.
We analyze $\Ex_{\Omega_1}\Ex_{\Omega_2}\max\limits_{i,j}\left|a_{ij}\eta_j(\omega_2)\xi_i({\omega_1})\right|$ under two different assumptions. Let $\eta$ always be a $p$-stable random variable. In the first case let $\xi$ be a $q$-stable random variable, we prove the following:
$$\Ex_{\Omega_1}\Ex_{\Omega_2}\max\limits_{i,j}\left|a_{ij}\eta_j(\omega_2)\xi_i({\omega_1})\right|\sim\norm{\left(\norm{\left(a_{ij}\right)_{i=1}^n}_q\right)_{j=1}^m}_p.$$
In the second case let $\xi$ be a standard gaussian random variable, we prove under this assumption
$$\Ex_{\Omega_1}\Ex_{\Omega_2}\max\limits_{i,j}\left|a_{ij}\eta_j(\omega_2)\xi_i({\omega_1})\right|\sim\norm{\left(\norm{\left(a_{ij}\right)_{i=1}^n}_{M_{\xi}}\right)_{j=1}^m}_p,$$
where $\norm{\cdot}_{M_{\xi}}$ denotes the Orlicz norm given by the Orlicz function
\begin{equation}\label{eqn:m2}
  M_{\xi}(s) = 
   \left\{ 
    \begin{array}{ll}
                 0 & ,\hbox{if}\quad s=0  \\
                 e^{-\frac{3}{2s^2}} & ,\hbox{if}\quad s\in (0,1)\\
                 e^{-\frac{3}{2}}(3s-2)  & ,\hbox{if}\quad s\geq 1.
    \end{array} 
   \right.
 \end{equation}
The idea to prove these two results is using the triangle inequality and Jensen's inequality for getting a lower and an upper bound. Afterwards we show that the resulting expressions are equal up to constants depending only on $p$ and $q$ using Theorem \ref{thm:orlicz_carsten1}. Furthermore, we show that we can express this resulting object in terms of a product norm, as above. This also allows us, in these cases, to express a result due to S. Kwapien and C. Sch\"utt, \cite{carsten4} (Example 1.6), in terms of random variables and in a very handy form.\\
${}$\\
Applying the results from \cite{carsten3}, combined with the first steps of the proof of Theorem \ref{thm:p/q-stabil}, one obtains
\begin{eqnarray*}
	\left.\begin{split}
	c_1 \alpha^{-1}\norm{\left(\max\limits_{1\leq l\leq n}\left(\frac{n+1-j}{\sum\limits_{i=1}\limits^n\frac{1}{a_{ij}}}\right)\right)_{j=1}^n}_p&\leq E_{\Omega_1}E_{\Omega_2}\max\limits_{i}\max\limits_{j}\left|a_{ij}\eta_j(\omega_2)\xi_i({\omega_1})\right|\\
	&\leq  c_p \beta^{-1}\ln(n+1)\norm{\left(\max\limits_{1\leq l\leq n}\left(\frac{n+1-j}{\sum\limits_{i=1}\limits^n\frac{1}{a_{ij}}}\right)\right)_{j=1}^n}_p,
	\end{split}\right.
\end{eqnarray*}
where $\eta$ is $p$-stable and $\xi$ is a standard gaussian. Since there is a logarithmic factor in the upper bound, this obviously does not give the correct order. With our method we give the correct order up to absolute constants in a very handy form.

\begin{satz}\label{thm:p/q-stabil} Let $p,q\in(1,2)$ with $p<q$.
	Additionally, let $\xi_i$, $i=1,...,n$ be independent copies of a q-stable random variable $\xi$ on $(\Omega_1,\mathfrak{A}_1,\Pv_1)$ and let $\eta_j$, $j=1,...,m$ be independent p-stable copies of a random variable $\eta$ on $(\Omega_2,\mathfrak{A}_2,\Pv_2)$. Then, for all $(a_{ij})_{i,j}\in\R^{n\times m}$, 
	$$\Ex_{\Omega_1}\Ex_{\Omega_2}\max\limits_{i,j}\left|a_{ij}\eta_j(\omega_2)\xi_i({\omega_1})\right|\sim_{p;q}\norm{\left(\norm{\left(a_{ij}\right)_{i=1}^n}_q\right)_{j=1}^m}_p.$$
\end{satz}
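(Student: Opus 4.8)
The plan is to integrate out the two product factors one after another, applying Theorem~\ref{thm:orlicz_carsten1} on each, and to control the dependence of the random coefficients that appear by using Jensen's inequality in both directions. First I would pull the index $j$ out of the maximum: since $\max_{i,j}\abs{a_{ij}\eta_j(\omega_2)\xi_i(\omega_1)}=\max_j\bigl(\abs{\eta_j(\omega_2)}\max_i\abs{a_{ij}\xi_i(\omega_1)}\bigr)$, setting $b_j(\omega_1):=\max_i\abs{a_{ij}\xi_i(\omega_1)}$ turns the expression into $\max_j\abs{b_j(\omega_1)\eta_j(\omega_2)}$. For fixed $\omega_1$ the numbers $b_j(\omega_1)$ are deterministic and the $\eta_j$ are iid $p$-stable, so Theorem~\ref{thm:orlicz_carsten1}---whose constants are absolute---together with the fact that $p$-stable variables generate the $\ell_p$-norm ($M_\eta(s)\sim_p s^p$) gives $\Ex_{\Omega_2}\max_j\abs{b_j(\omega_1)\eta_j}\sim_p\norm{(b_j(\omega_1))_{j=1}^m}_p$, with constants uniform in $\omega_1$. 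Integrating over $\Omega_1$ reduces the theorem to the estimate
$$\Ex_{\Omega_1}\norm{\bigl(b_j(\omega_1)\bigr)_{j=1}^m}_p\sim_{p;q}\norm{\bigl(A_j\bigr)_{j=1}^m}_p,\qquad A_j:=\norm{(a_{ij})_{i=1}^n}_q.$$

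For the lower bound I would use that $\norm{\cdot}_p$ is convex, so Jensen's inequality (the triangle inequality for a vector-valued integral) yields $\Ex_{\Omega_1}\norm{(b_j)_{j}}_p\geq\norm{(\Ex_{\Omega_1}b_j)_{j}}_p$ \emph{irrespective of the dependence} among the $b_j$. Each $b_j$ is a maximum of the $q$-stable family $\xi_i$, so a second application of Theorem~\ref{thm:orlicz_carsten1} on $\Omega_1$ (using $M_\xi(s)\sim_q s^q$) gives $\Ex_{\Omega_1}b_j=\Ex_{\Omega_1}\max_i\abs{a_{ij}\xi_i}\sim_q A_j$, and hence $\Ex_{\Omega_1}\norm{(b_j)_j}_p\gtrsim_{p;q}\norm{(A_j)_j}_p$.

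For the upper bound I would instead use that $t\mapsto t^{1/p}$ is concave (as $p>1$), so Jensen gives $\Ex_{\Omega_1}\norm{(b_j)_j}_p=\Ex_{\Omega_1}\bigl(\sum_j b_j^p\bigr)^{1/p}\leq\bigl(\sum_j\Ex_{\Omega_1}b_j^p\bigr)^{1/p}$. It then suffices to prove the one-dimensional moment bound $\Ex_{\Omega_1}\bigl(\max_i\abs{a_{ij}\xi_i}\bigr)^p\lesssim_{p;q}A_j^p$. Since $p<q$ this $p$-th moment is finite; writing it as $\int_0^\infty pt^{p-1}\Pv_1(\max_i\abs{a_{ij}\xi_i}>t)\,dt$ and bounding the tail by a union bound and the stable tail estimate $\Pv_1(\abs{\xi}>u)\lesssim_q u^{-q}$, one gets $\Pv_1(\max_i\abs{a_{ij}\xi_i}>t)\lesssim_q\min\bigl(1,A_j^q/t^q\bigr)$. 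Splitting the integral at $t\sim A_j$ and using that the exponent $p-q<0$ makes the tail part converge yields $\Ex_{\Omega_1}(\max_i\abs{a_{ij}\xi_i})^p\lesssim_{p;q}A_j^p$, hence $\Ex_{\Omega_1}\norm{(b_j)_j}_p\lesssim_{p;q}\norm{(A_j)_j}_p$. Combined with the lower bound this proves the claim.

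The genuine obstacle is that the coefficients $b_j$ are \emph{not} independent---they all share the same $\xi_i$---so Theorem~\ref{thm:orlicz_carsten1} cannot be applied directly to the product family, and one cannot integrate the $\ell_p$-norm componentwise. The resolution above sidesteps this: the two Jensen inequalities sandwich $\Ex_{\Omega_1}\norm{(b_j)_j}_p$ using only the first moment (from below) and the $p$-th moment (from above) of each single $b_j$, both of which are controlled independently of the dependence structure. I expect the main technical work to be the $p$-th moment tail estimate and the verification that the constants coming from Theorem~\ref{thm:orlicz_carsten1} are genuinely uniform in $\omega_1$.
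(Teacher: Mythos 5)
Your proposal is correct, and its skeleton coincides with the paper's: integrate out $\Omega_2$ first via the fact that $p$-stable variables generate the $\ell_p$-norm, obtain the lower bound from the triangle (vector-valued Jensen) inequality $\Ex_{\Omega_1}\norm{(b_j)_j}_p\geq\norm{(\Ex_{\Omega_1}b_j)_j}_p$, and the upper bound from concavity of $t\mapsto t^{1/p}$, reducing everything to comparing $\Ex_{\Omega_1}b_j$ and $(\Ex_{\Omega_1}b_j^p)^{1/p}$ with $A_j=\norm{(a_{ij})_i}_q$. Where you diverge is in the last technical step of the upper bound. The paper notes that $b_j^p=\max_i\abs{a_{ij}^p\xi_i^p}$, applies Theorem~\ref{thm:orlicz_carsten1} to the variable $\xi^p$ to get $\Ex_{\Omega_1}b_j^p\sim\norm{(a_{ij}^p)_i}_{M_{\xi^p}}$, and then shows by an integral computation with the tail bound $\Pv_1(\abs{\xi}\geq t)\lesssim t^{-q}$ that $M_{\xi^p}(s)\sim s^{q/p}$, hence $\norm{(a_{ij}^p)_i}_{M_{\xi^p}}^{1/p}\sim A_j$. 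You instead bound $\Ex_{\Omega_1}b_j^p$ directly by the layer-cake formula, a union bound, and the same stable tail estimate, splitting the integral at $t\sim A_j$; this is a correct and somewhat more elementary route that avoids the Orlicz-function machinery entirely for the upper bound. The trade-off is that the paper's formulation through $M_{\xi^p}$ is the one that survives when the inner variables are not of pure power type: in Theorem~\ref{thm:p-stabil/gauss} the same scheme is run with Gaussians, where $\Ex_{\Omega_1}b_j^p$ does not collapse to a power of a single norm and one genuinely needs the comparison $M_{\xi^p}(s)\sim_p M_{\xi}(s^{1/p})$ at the level of Orlicz functions. Your uniformity worry about the constants from Theorem~\ref{thm:orlicz_carsten1} is unfounded: they depend only on the law of $\eta$, not on the coefficient vector, so they are automatically uniform in $\omega_1$.
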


\begin{proof} Let $a_j$, $j=1,...,m$ be real numbers. In \cite{carsten1} it was shown that 
	\begin{equation}\label{eqn:p-stabil}\Ex_{\Omega_2}\max\limits_{1\leq j\leq m}\left|a_j\eta_j(\omega_2)\right|\sim\norm{(a_j)_{j=1}^m}_p.
	\end{equation}
	Applying this, we get
	$$\Ex_{\Omega_1}\Ex_{\Omega_2}\max\limits_{i,j}\left|a_{ij}\eta_j(\omega_2)\xi_i({\omega_1})\right|\sim \Ex_{\Omega_1}\norm{\left(\max\limits_{1\leq i\leq n}\left|a_{ij}\xi_i(\omega_1)\right|\right)_{j=1}^m}_p.$$
	Using the triangle inequality and (\ref{eqn:p-stabil}) for the q-stable $\xi_i$, $i=1,...,n$, we get
	$$\Ex_{\Omega_1}\Ex_{\Omega_2}\max\limits_{i,j}\left|a_{ij}\eta_j(\omega_2)\xi_i({\omega_1})\right|\gtrsim\norm{\left(\Ex_{\Omega_1}\max\limits_{1\leq i\leq n}\left|a_{ij}\xi_i(\omega_1)\right|\right)_{j=1}^m}_p\sim \norm{\left(\norm{\left(a_{ij}\right)_{i=1}^n}_q\right)_{j=1}^m}_p.$$
	For the upper bound we apply Jensen's inequality and obtain
	$$\Ex_{\Omega_1}\Ex_{\Omega_2}\max\limits_{i,j}\left|a_{ij}\eta_j(\omega_2)\xi_i({\omega_1})\right|\lesssim \norm{\left(\left(\Ex_{\Omega_1}\max\limits_{1\leq i\leq n}\left|a_{ij}^p\xi_i^p(\omega_1)\right|\right)^{\frac{1}{p}}\right)_{j=1}^m}_p.$$
	By Theorem \ref{thm:orlicz_carsten1} we get
	$$\Ex_{\Omega_1}\max\limits_{1\leq i\leq n}\left|a_{ij}^p\xi_i^p(\omega_1)\right|\sim\norm{\left(a_{ij}^p\right)_{i=1}^m}_{M_{\xi^p}},$$
	where
	\begin{equation}\label{eqn:m^p}
	M_{\xi^p}(s)=\int_0^s\left(\frac{1}{t}\Pv_1\left(|\xi|^p\geq\frac{1}{t}\right)+\int_{\frac{1}{t}}^{\infty}\Pv_1\left(|\xi|^p\geq u\right)du\right)dt.
\end{equation}
	To prove that the upper and lower bound are equal up to constants, we show that $M_{\xi^p}(s)\sim s^{q/p}$. This is equivalent to $M_{\xi^p}\left(s^p\right)\sim s^{q}$ and hence we get
	$$\norm{\left(a_{ij}^p\right)_{i=1}^n}_{M_{\xi^p}}^{\frac{1}{p}}=\norm{\left(a_{ij}\right)_{i=1}^n}_{M_{\xi^p}\circ ^p}\sim\norm{\left(a_{ij}\right)_{i=1}^n}_q,$$
	since the function $s\mapsto s^q$ generates the $l_q$-norm.
	To do so, we use the fact that if a random variable $\xi$ is $q$-stable for all $t>0$ it holds true that
	\begin{equation}\label{eqn:p-stabil/tail}
		P\left(|\xi|\geq t\right)\lesssim t^{-q},
	\end{equation}
	for references see for example \cite{julio}.\\
	Combining (\ref{eqn:m^p}) and (\ref{eqn:p-stabil/tail}) we get, since $q>p$,
	\begin{eqnarray*}
		\left.\begin{split}
		M_{\xi^p}(s)
		&=\int_0^s\left(\frac{1}{t}\Pv_1\left(|\xi|^p\geq \frac{1}{t}\right)+\int_{\frac{1}{t}}^{\infty}\Pv_1\left(|\xi|^p\geq u\right)du\right)dt\\
		&=\int_0^s\left(\frac{1}{t}\Pv_1\left(|\xi|\geq \left(\frac{1}{t}\right)^{\frac{1}{p}}\right)+\int_{\frac{1}{t}}^{\infty}\Pv_1\left(|\xi|\geq u^{\frac{1}{p}}\right)du\right)dt\\
		&\leq\int_0^s\left(\frac{1}{t}\left(\frac{1}{t}\right)^{-\frac{q}{p}}+\int_{\frac{1}{t}}^{\infty}u^{-\frac{q}{p}}\right)dt\\
		&=\int_0^s\left(t^{-1+\frac{q}{p}}+\left[-u^{-\frac{q}{p}+1}\right]_{\frac{1}{t}}^{\infty}\right)dt\\
		&=2\int_0^st^{-1+\frac{q}{p}}dt\\
		&\sim s^{\frac{q}{p}},
		\end{split}\right.
	\end{eqnarray*}	
which yields the desired result.
	\end{proof}

\begin{satz}\label{thm:p-stabil/gauss} Let $p\in(1,2)$, let $\xi_i$, $i=1,...,n$ be independent copies of a p-stable random variable $\xi$ on $(\Omega_1,\mathfrak{A}_1,\Pv_1)$ and let $\eta_j$, $j=1,...,m$ be independent copies of a standard gaussian random variable $\eta$ on $(\Omega_2,\mathfrak{A}_2,\Pv_2)$. Then, for all $(a_{ij})_{i,j}\in\R^{n\times m}$,
	$$\Ex_{\Omega_1}\Ex_{\Omega_2}\max\limits_{i,j}\left|a_{ij}\eta_j(\omega_2)\xi_i({\omega_1})\right|\sim_{p}\norm{\left(\norm{\left(a_{ij}\right)_{i=1}^n}_{M_{\xi}}\right)_{j=1}^m}_p,$$
	where $\norm{\cdot}_{M_{\xi}}$ denotes the Orlicz norm given by the Orlicz function
	\begin{equation}\label{eqn:m2}
	  M_{\xi}(s) = 
	   \left\{ 
	    \begin{array}{ll}
	                 0 & ,\hbox{if}\quad s=0  \\
	                 e^{-\frac{3}{2s^2}} & ,\hbox{if}\quad s\in (0,1)\\
	                 e^{-\frac{3}{2}}(3s-2)  & ,\hbox{if}\quad s\geq 1.
	    \end{array} 
	   \right.
	 \end{equation}
\end{satz}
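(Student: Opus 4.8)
The plan is to imitate the proof of Theorem~\ref{thm:p/q-stabil}, the only genuinely new ingredient being the explicit computation of the Orlicz function generated by a standard gaussian. First I would apply (\ref{eqn:p-stabil}) to the $p$-stable family in order to integrate out one of the two probability spaces; writing $\max_{i,j}\abs{a_{ij}\eta_j\xi_i}=\max_j\left(\max_i\abs{a_{ij}\xi_i}\right)\abs{\eta_j}$ and using (\ref{eqn:p-stabil}) for the $p$-stable $\eta_j$ this gives, exactly as before,
$$\Ex_{\Omega_1}\Ex_{\Omega_2}\max_{i,j}\abs{a_{ij}\eta_j(\omega_2)\xi_i(\omega_1)}\sim\Ex_{\Omega_1}\norm{\left(\max_{1\le i\le n}\abs{a_{ij}\xi_i(\omega_1)}\right)_{j=1}^m}_p,$$
where $\xi$ is the gaussian variable whose maximum sits inside. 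Thus the outer $\ell_p^m$-structure is already in place and the whole difficulty is concentrated in the inner maximum over the gaussian family.

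For the lower bound I would use the triangle inequality in $\ell_p^m$ to pull $\Ex_{\Omega_1}$ inside the norm and then invoke Theorem~\ref{thm:orlicz_carsten1} for the gaussian $\xi_i$, giving inner term $\norm{(a_{ij})_{i=1}^n}_{M_\xi}$ with $M_\xi$ the function (\ref{eqn:m1}). For the upper bound I would argue via Jensen's inequality precisely as in the previous proof, landing on $\norm{\left(\left(\Ex_{\Omega_1}\max_{1\le i\le n}\abs{a_{ij}^p\xi_i^p}\right)^{1/p}\right)_{j=1}^m}_p$ and applying Theorem~\ref{thm:orlicz_carsten1} once more to identify the inner factor with $\norm{(a_{ij})_{i=1}^n}_{M_{\xi^p}\circ(\cdot)^p}$, where $M_{\xi^p}$ is (\ref{eqn:m^p}). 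The two bounds then coincide, up to constants depending only on $p$, as soon as the Orlicz functions $M_\xi$ and $s\mapsto M_{\xi^p}(s^p)$ generate the same norm.

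The heart of the matter is therefore the evaluation of these two Orlicz functions, and this is where the gaussian departs from the $q$-stable case: instead of the polynomial tail (\ref{eqn:p-stabil/tail}) I must feed the bound $\Pv(\abs{\xi}\ge t)\sim t^{-1}e^{-t^2/2}$ into (\ref{eqn:m1}). I would first check that near $0$ the term $\frac1t\Pv(\abs{\xi}\ge 1/t)$ dominates the inner integral, so that $M_\xi(s)\sim\int_0^s e^{-1/(2t^2)}\,dt$; a Laplace/integration-by-parts estimate then yields $M_\xi(s)\sim s^3 e^{-1/(2s^2)}$ as $s\to 0$, while for large $s$ the integrand tends to a constant and $M_\xi(s)\sim s$. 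For $M_{\xi^p}$ the same computation, after rewriting $\Pv(\abs{\xi}^p\ge u)=\Pv(\abs{\xi}\ge u^{1/p})$ as in (\ref{eqn:m^p}), gives $M_{\xi^p}(s^p)\sim\int_0^{s^p} t^{1/p-1}e^{-t^{-2/p}/2}\,dt$, and the substitution $t=r^p$ turns this into $p\int_0^s e^{-1/(2r^2)}\,dr$, i.e. into $M_\xi(s)$ up to a constant. This closes the sandwich and reduces everything to $\norm{\left(\norm{(a_{ij})_{i=1}^n}_{M_\xi}\right)_{j=1}^m}_p$; it then remains to recognise that $s\mapsto\int_0^s e^{-1/(2t^2)}\,dt$ is equivalent, as an Orlicz function, to the explicit expression in (\ref{eqn:m2}), matching $s^3e^{-1/(2s^2)}$ against $e^{-3/(2s^2)}$ near $0$ and the linear growth against the branch $e^{-3/2}(3s-2)$ for $s\ge 1$.

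I expect this last equivalence to be the main obstacle. Writing $N$ for the explicit function of (\ref{eqn:m2}), the point is that the polynomial prefactor $s^3$ prevents $M_\xi$ from being pointwise comparable to $N$, so I cannot conclude as cleanly as in Theorem~\ref{thm:p/q-stabil}, where both functions were pure powers. Instead I would prove equivalence in the weaker argument-scaling sense, producing constants $c<\sqrt3<c'$ with $N(cs)\lesssim_p M_\xi(s)\lesssim_p N(c's)$; since the Orlicz norm is monotone in its defining sum this already gives $\norm{\cdot}_{M_\xi}\sim_p\norm{\cdot}_N$ (here the rapid variation of $N$ is what converts the multiplicative tail constants into harmless shifts of the argument). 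Along the way I must verify that $N$ is genuinely a convex Orlicz function, that its linear branch for $s\ge 1$ reproduces the large-$s$ order of the integral, and that the error terms in the gaussian tail asymptotics are controlled uniformly enough to make the relation $M_\xi(s)\sim\int_0^s e^{-1/(2t^2)}\,dt$ valid on the whole half-line rather than merely asymptotically.
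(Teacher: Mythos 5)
Your proposal is correct and follows the paper's overall architecture: integrate out the $p$-stable family via (\ref{eqn:p-stabil}) to install the outer $\ell_p^m$-structure, use the triangle inequality for the lower bound and Jensen plus Theorem \ref{thm:orlicz_carsten1} for the upper bound, and reduce everything to comparing $M_{\xi^p}(s^p)$ with $M_{\xi}(s)$ for the gaussian $\xi$. (You also resolved the labelling the right way round: for the displayed formula to be consistent the inner index must carry the gaussians and the outer index the $p$-stables, which is exactly how the paper's own proof proceeds despite the wording of the statement.) Where you genuinely diverge is in the comparison of the two Orlicz functions. The paper splits into $s\le 1$ and $s>1$, runs separate upper and lower estimates (the $s>1$ case via incomplete gamma functions), and ends up declaring that both $e^{-1/(2s^{2/p})}$ and $s^{1/p}e^{-3/(2s^{2/p})}$ are ``$\sim M_{\xi}(s^{1/p})=e^{-3/(2s^{2/p})}$'', which fails as a pointwise multiplicative equivalence; the argument only closes for the reason you make explicit, namely that on the scale $e^{-a/s^2}$ a polynomial prefactor or a change of $a$ amounts to a bounded rescaling of the argument, and Orlicz functions related by $N(cs)\lesssim M(s)\lesssim N(c's)$ generate equivalent norms. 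Your route --- identify both $M_{\xi}(s)$ and $M_{\xi^p}(s^p)$ with $\int_0^s e^{-1/(2r^2)}\,dr$ by the substitution $t=r^p$ after checking that $\tfrac1t\Pv(|\xi|\ge 1/t)$ dominates the integrand, then compare this single integral with the explicit function of (\ref{eqn:m2}) in the argument-scaling sense with constants $c<\sqrt3<c'$ --- is cleaner (one change of variables replaces four case analyses) and repairs the exponential-scale looseness in the paper; your threshold $\sqrt3$ is exactly right, since $e^{-3/(2c^2s^2)}$ must be compared against $s^3e^{-1/(2s^2)}$. What you still owe is routine: the Szarek--Werner bounds give $\Pv(|\xi|\ge t)\sim (1+t)^{-1}e^{-t^2/2}$ uniformly on $[0,\infty)$, which legitimises your reduction on the whole half-line (for $s>1$ the integrand tends to $\Ex|\xi|$ and both sides are linear), and the two-sided argument-rescaling must be verified up to $s=1$ rather than only asymptotically at $0$.
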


Before giving the proof, we need the following observation concerning standard gaussian random variables:

\begin{bem} Let $\xi$ be a standard gaussian random variable, then the following holds for all $t>0$, since the distribution of $\xi$ is symmetric
	$$\Pv\left(|\xi|\geq t\right)=2\Pv\left(\xi\geq t\right)=\sqrt{\frac{2}{\pi}}\int\limits_{t}\limits^{\infty}e^{-\frac{x^2}{2}}dx.$$
Now, applying the results from \cite{szarek1}, we get
\begin{equation}\label{eqn:gauss}\Pv\left(|\xi|\geq t\right)=\sqrt{\frac{2}{\pi}}\int\limits_{t}\limits^{\infty}e^{-\frac{x^2}{2}}\sim \frac{1}{t}e^{-\frac{t^2}{2}}.
\end{equation}
\end{bem}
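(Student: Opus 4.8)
The plan is to run the proof of Theorem~\ref{thm:p/q-stabil} essentially verbatim, with the $q$-stable inner family there replaced by the Gaussian family here; the probabilistic scaffolding is identical and all of the new work sits in one Orlicz-function computation. Following the preceding Observation and formulas (\ref{eqn:m1}), (\ref{eqn:m^p}), (\ref{eqn:gauss}), I take $\xi$ to be the standard Gaussian (generating $M_\xi$) and $\eta$ the $p$-stable family, which is the reading under which the Orlicz norm $\norm{\cdot}_{M_\xi}$ in the statement is meaningful. First I would peel off the $p$-stable family by (\ref{eqn:p-stabil}), reducing the left-hand side to $\Ex_{\Omega_1}\norm{(\max_{1\le i\le n}\abs{a_{ij}\xi_i(\omega_1)})_{j=1}^m}_p$. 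Exactly as in Theorem~\ref{thm:p/q-stabil}, convexity of $\norm{\cdot}_p$ (Jensen in the form $\Ex\norm{X}\ge\norm{\Ex X}$) gives the lower bound $\gtrsim\norm{(\Ex_{\Omega_1}\max_i\abs{a_{ij}\xi_i})_{j=1}^m}_p\sim\norm{(\norm{(a_{ij})_{i=1}^n}_{M_\xi})_{j=1}^m}_p$ by Theorem~\ref{thm:orlicz_carsten1}, while Jensen in the form $\Ex(\sum_j x_j^p)^{1/p}\le(\sum_j\Ex x_j^p)^{1/p}$, applied with $x_j=\max_i\abs{a_{ij}\xi_i}$, gives the upper bound through $\Ex_{\Omega_1}\max_i\abs{a_{ij}^p\xi_i^p}\sim\norm{(a_{ij}^p)_{i=1}^n}_{M_{\xi^p}}$. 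Matching the two bounds reduces, just as the identity $M_{\xi^p}(s^p)\sim s^q$ closed Theorem~\ref{thm:p/q-stabil}, to the single claim $M_{\xi^p}(s^p)\sim M_\xi(s)$, together with the identification of $M_\xi$ with (\ref{eqn:m2}).

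For $M_\xi$ I would insert the Gaussian tail (\ref{eqn:gauss}) into (\ref{eqn:m1}). For small $t$ the first term is $\frac1t\Pv(\abs\xi\ge 1/t)\sim e^{-1/(2t^2)}$ while the second term $\int_{1/t}^\infty\Pv(\abs\xi\ge u)\,du\sim t^2e^{-1/(2t^2)}$ is smaller by a factor $t^2$ and so negligible, so the integrand is $\sim e^{-1/(2t^2)}$. A Watson--Laplace estimate of $\int_0^s e^{-1/(2t^2)}\,dt$ (the integrand increases and peaks at the upper endpoint, with local scale $\sim s^3$) yields $M_\xi(s)\sim s^3e^{-1/(2s^2)}$ as $s\to 0$. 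The point I want to stress is that this is \emph{not} literally the branch in (\ref{eqn:m2}): the polynomial prefactor $s^3$ is absorbed by a constant rescaling of the argument, since the super-exponential core $e^{-c/s^2}$ dominates every power of $s$. Concretely one checks the equivalence of Orlicz functions at zero, i.e. the existence of $k,K>0$ with
\[ s^3e^{-\frac{1}{2s^2}}\le K\,e^{-\frac{3}{2k^2s^2}}\quad\text{and}\quad e^{-\frac{3}{2s^2}}\le K\,(ks)^3e^{-\frac{1}{2k^2s^2}}\qquad (s\to 0), \]
which both hold for any $k>\sqrt3$ and are precisely the condition under which the generated norms agree up to constants. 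For $s\ge 1$ the tail in (\ref{eqn:m1}) is of order one, the integrand is bounded, and $M_\xi$ grows linearly; the explicit affine branch $e^{-3/2}(3s-2)$ is simply the convex $C^1$ continuation, and I would only record continuity and the matching of one-sided derivatives at $s=1$.

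The upper bound is the same computation carried out for $\abs{\xi}^p$. Writing $\Pv(\abs{\xi}^p\ge v)=\Pv(\abs\xi\ge v^{1/p})\sim v^{-1/p}e^{-v^{2/p}/2}$ and inserting this into (\ref{eqn:m^p}), the small-$t$ integrand is $\sim t^{1/p-1}e^{-t^{-2/p}/2}$, and the same Laplace estimate gives $M_{\xi^p}(s)\sim s^{3/p}e^{-s^{-2/p}/2}$. Substituting $s\mapsto s^p$ turns the core into $e^{-1/(2s^2)}$ and the prefactor into $s^3$, so $M_{\xi^p}(s^p)\sim s^3e^{-1/(2s^2)}\sim M_\xi(s)$ directly. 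Using the rescaling identity $\norm{(a_{ij}^p)_{i=1}^n}_{M_{\xi^p}}^{1/p}=\norm{(a_{ij})_{i=1}^n}_{M_{\xi^p}\circ{}^{p}}$ of Theorem~\ref{thm:p/q-stabil} together with $M_{\xi^p}\circ{}^{p}\sim M_\xi$, the Jensen upper bound collapses onto the same nested norm as the lower bound, which finishes the proof.

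The genuinely delicate point, and the only place where the argument goes beyond Theorem~\ref{thm:p/q-stabil}, is the Orlicz identification of the second paragraph: I must make the Laplace estimates two-sided and uniform enough to control both endpoints of each integral, and, above all, justify that the polynomial prefactors they produce are invisible to the Orlicz norm. The clean way is never to isolate a prefactor but to prove the two displayed equivalence-at-zero inequalities directly; convexity and the behaviour for $s\ge1$ then only need to be verified on the explicit representative (\ref{eqn:m2}), which is elementary.
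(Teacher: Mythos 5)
You have proved the wrong statement. The item in question is the short Observation preceding Theorem~\ref{thm:p-stabil/gauss}, namely the two-sided Gaussian tail estimate
\[
\Pv\left(|\xi|\geq t\right)=2\Pv\left(\xi\geq t\right)=\sqrt{\tfrac{2}{\pi}}\int_t^{\infty}e^{-\frac{x^2}{2}}\,dx\sim \tfrac{1}{t}e^{-\frac{t^2}{2}},
\]
whereas your proposal sketches a proof of Theorem~\ref{thm:p-stabil/gauss} itself (the product estimate with the nested Orlicz norm). Worse, relative to the assigned task the argument is circular: you explicitly invoke (\ref{eqn:gauss}) as a known input (``I would insert the Gaussian tail (\ref{eqn:gauss}) into (\ref{eqn:m1})''), i.e.\ you assume exactly the estimate that was to be proved, and nowhere in your text is the tail equivalence itself established. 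The paper's own proof of the Observation is two lines: symmetry of the standard Gaussian distribution gives $\Pv(|\xi|\geq t)=2\Pv(\xi\geq t)$, and the equivalence of the tail integral with $t^{-1}e^{-t^2/2}$ is quoted from \cite{szarek1}.

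For completeness, the missing content is elementary: one always has $\int_t^{\infty}e^{-x^2/2}\,dx\leq \frac{1}{t}\int_t^{\infty}x\,e^{-x^2/2}\,dx=\frac{1}{t}e^{-t^2/2}$, and integration by parts gives $\int_t^{\infty}e^{-x^2/2}\,dx=\frac{1}{t}e^{-t^2/2}-\int_t^{\infty}x^{-2}e^{-x^2/2}\,dx\geq \frac{1}{t}e^{-t^2/2}-\frac{1}{t^2}\int_t^{\infty}e^{-x^2/2}\,dx$, hence $\int_t^{\infty}e^{-x^2/2}\,dx\geq \frac{t}{1+t^2}e^{-t^2/2}$; for $t\geq 1$ the two bounds differ by at most a factor $2$, which is the claimed $\sim$. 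Note in passing that the lower bound cannot hold with absolute constants for \emph{all} $t>0$, since $\frac{1}{t}e^{-t^2/2}\to\infty$ as $t\to 0$ while the probability is at most $1$; the Observation is only ever applied in the paper with arguments bounded below by $1$, where the equivalence is valid. Your sketch of Theorem~\ref{thm:p-stabil/gauss}, taken on its own terms, is a plausible alternative to the paper's case-by-case incomplete-Gamma computation (your Laplace-endpoint asymptotics with the prefactor $s^3$ absorbed by rescaling the Orlicz argument is a legitimate device), but it does not answer, and cannot substitute for, the statement you were asked to prove.
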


\begin{proof} (Theorem \ref{thm:p-stabil/gauss}) Let $\left(a_{i}\right)_{i=1}^n\in\R^{n}$. Applying Theorem \ref{thm:orlicz_carsten1}, we get 
	$$\Ex\max\limits_{i=1,...,n}\left|a_{i}\xi_i\right|\sim \norm{(a_i)_{i=1}^n}_{M_{\xi}},$$
	where, as shown in \cite{carsten1}, the following holds
	\begin{eqnarray*}
	  M_{\xi}(s) &= \int\limits_0\limits^s\left( \frac{1}{t} \Pv_1\left(|\xi| \geq \frac{1}{t}\right) + \int\limits_{\frac{1}{t}}\limits^{\infty} \Pv_1(|\xi| \geq u) du\right) dt\\
	   &=\left\{ 
	    \begin{array}{ll}
	                 0 & ,\hbox{if}\quad s=0  \\
	                 e^{-\frac{3}{2s^2}} & ,\hbox{if}\quad s\in (0,1)\\
	                 e^{-\frac{3}{2}}(3s-2)  & ,\hbox{if}\quad s\geq 1.
	    \end{array} 
	   \right.
	 \end{eqnarray*}
	In accordance with the ideas from the proof of Theorem \ref{thm:p/q-stabil}, we get
	$$\Ex_{\Omega_1}\Ex_{\Omega_2}\max\limits_{i,j}\left|a_{ij}\eta_j(\omega_2)\xi_i({\omega_1})\right|\gtrsim \norm{\left(\norm{\left(a_{ij}\right)_{i=1}^n}_{M_{\xi}}\right)_{j=1}^m}_p$$
	and
	$$\Ex_{\Omega_1}\Ex_{\Omega_2}\max\limits_{i,j}\left|a_{ij}\eta_j(\omega_2)\xi_i({\omega_1})\right|\lesssim \norm{\left(\left(\norm{\left(a_{ij}^p\right)_{i=1}^n}_{M_{\xi^p}}\right)^{\frac{1}{p}}\right)_{j=1}^m}_p.$$
	As in the previous proof it remains to show that 
	$$\norm{\left(a_{ij}^p\right)_{i=1}^m}_{M_{\xi^p}}^{\frac{1}{p}}\sim\norm{(a_{ij})_{i=1}^m}_{M_{\xi}}.$$
	Therefore, we prove again that $M_{\xi^p}(s)\sim_p M_{\xi}\left(s^{\frac{1}{p}}\right)$, since this is equivalent to $M_{\xi^p}(s^p)\sim_p M_{\xi}\left(s\right)$ and yields
	$$\norm{\left(a_{ij}^p\right)_{i=1}^n}_{M_{\xi^p}}^{\frac{1}{p}}=\norm{\left(a_{ij}\right)_{i=1}^n}_{M_{\xi^p}\circ ^p}\sim_p\norm{\left(a_{ij}\right)_{i=1}^n}_{M_{\xi}}.$$
	First we show $M_{\xi^p}(s)\lesssim_p M_{\xi}\left(s^{\frac{1}{p}}\right)$ and afterwards we prove the reverse inequality. To do so, we distinguish between $s\leq 1$ and $s> 1$.\\
	${}$\\
	\underline{Upper bound $M_{\xi^p}(s)\lesssim_p M_{\xi}\left(s^{\frac{1}{p}}\right)$:} \\
	${}$\\
	\underline{Case 1:} Let $s\leq 1$.
	\begin{eqnarray*}
		\left.\begin{split}
		M_{\xi^p}(s)
		& =\int\limits_0\limits^s \left(\frac{1}{t} \Pv_1\left(|\xi|^p\geq \frac{1}{t}\right) + \int\limits_{\frac{1}{t}}\limits^{\infty} \Pv_1(|\xi|^p \geq u) du \right) dt\\
		&\sim_p\int_0^{s^{\frac{1}{p}}}\left(\frac{1}{x}\Pv_1\left(|\xi|\geq \frac{1}{x}\right)+\underbrace{\int_{\frac{1}{x}}^{\infty}\Pv_1\left(|\xi|\geq y\right)y^{p-1}dy}_{(I)}\right)x^{p-1}dx.
		\end{split}\right.
	\end{eqnarray*}
	By (\ref{eqn:gauss}) we get
	$$(I)\sim \int_{\frac{1}{x}}^{\infty}e^{-\frac{y^2}{2}}y^{p-2}dy.$$
	Since $p-2<0$ and $y\geq \frac{1}{x}\geq 1$, we have $y^{p-2}\leq 1$ and so we get again by (\ref{eqn:gauss})
	$$(I)\lesssim \int_{\frac{1}{x}}^{\infty}e^{-\frac{y^2}{2}}dy\sim xe^{-\frac{x^2}{2}}.$$
	Altogether 
	\begin{eqnarray*}
		\left.\begin{split}
		M_{\xi^p}(s)
		&\lesssim_p\int_0^{s^{\frac{1}{p}}}\left(\frac{1}{x}\Pv_1\left(|\xi|\geq \frac{1}{x}\right)+xe^{-\frac{1}{2x^2}}\right)x^{p-1}dx.
		\end{split}\right.
	\end{eqnarray*}
	To estimate $P\left(|\xi|\geq \frac{1}{x}\right)$, we apply (\ref{eqn:gauss}) and then take into account that for all $x\in( 0, s^{1/p})$ it holds that $x\leq 1$ and so $e^{-\frac{1}{2x^2}}+xe^{-\frac{1}{2x^2}}\leq 2e^{-\frac{1}{2x^2}}$. Using this, we get
	\begin{eqnarray*}
		\left.\begin{split}
		M_{\xi^p}(s)
		&\lesssim_p\int_0^{s^{\frac{1}{p}}}\left(\frac{1}{x}xe^{-\frac{1}{2x^2}}+xe^{-\frac{1}{2x^2}}\right)x^{p-1}dx\\
		&=\int_0^{s^{\frac{1}{p}}}\left(e^{-\frac{1}{2x^2}}+xe^{-\frac{1}{2x^2}}\right)x^{p-1}dx\\
		&\lesssim\int_0^{s^{\frac{1}{p}}}x^{p-1}e^{-\frac{1}{2x^2}}dx\\
		&=\int_{s^{-\frac{1}{p}}}^{\infty}t^{-p-1}e^{-\frac{t^2}{2}}dt.
		\end{split}\right.
	\end{eqnarray*}
	Since $-p-1<-2$ and $t\geq s^{-\frac{1}{p}}\geq 1$, it holds that $t^{-p-1}\leq 1$. Applying this and (\ref{eqn:gauss}), we get
	\begin{eqnarray*}
		\left.\begin{split}
		M_{\xi^p}(s)
		&\lesssim_p\int_{s^{-\frac{1}{p}}}^{\infty}e^{-\frac{t^2}{2}}dt\sim s^{\frac{1}{p}}e^{-\frac{1}{2s^{\frac{2}{p}}}}.
		\end{split}\right.
	\end{eqnarray*}
	Finally, as $s\leq 1$, we get
	$$M_{\xi^p}(s)\lesssim_p e^{-\frac{1}{2s^{\frac{2}{p}}}}\sim M_{\xi}\left(s^{\frac{1}{p}}\right).$$
		\underline{Case 2:} Let $s> 1$.
		\begin{eqnarray*}
			\left.\begin{split}
		 	M_{\xi^p}(s)
			& =\int_0^s\left(\frac{1}{t}\Pv_1\left(|\xi|^p\geq\frac{1}{t}\right)+\int_{\frac{1}{t}}^{\infty}\Pv_1\left(|\xi|^p\geq u\right)du\right)dt.\\
			& =\underbrace{\int_0^1\left(\frac{1}{t}\Pv_1\left(|\xi|^p\geq\frac{1}{t}\right)+\int_{\frac{1}{t}}^{\infty}\Pv_1\left(|\xi|^p\geq u\right)du\right)dt}_{(a)}\\
			&\quad\quad\quad\quad\quad\quad\quad\quad\quad\quad\quad\quad\quad +\underbrace{\int_1^s\left(\frac{1}{t}\Pv_1\left(|\xi|^p\geq\frac{1}{t}\right)+\int_{\frac{1}{t}}^{\infty}\Pv_1\left(|\xi|^p\geq u\right)du\right)dt}_{(b)}.
			\end{split}\right.
		\end{eqnarray*}
		$(a)$ can be estimated by case 1 and so yields
	$$\int_0^1\left(\frac{1}{t}\Pv_1\left(|\xi|^p\geq\frac{1}{t}\right)+\int_{\frac{1}{t}}^{\infty}\Pv_1\left(|\xi|^p\geq u\right)du\right)dt\lesssim e^{-\frac{1}{2}}.$$
	So it suffices to estimate 
	$$(b)=\underbrace{\int_1^s\frac{1}{t}\Pv_1\left(|\xi|^p\geq\frac{1}{t}\right)dt}_{(I)}+\underbrace{\int_1^s\int_{\frac{1}{t}}^{\infty}\Pv_1\left(|\xi|^p\geq u\right)dudt}_{(II)}.$$
	At first, we estimate $(I)$. Using Markov's inequality, we get
	\textbf{\begin{eqnarray*}
		\left.\begin{split}
		(I)= \int_1^s\frac{1}{t}\Pv_1\left(|\xi|\geq\left(\frac{1}{t}\right)^{\frac{1}{p}}\right)dt
		\leq\int_1^s\frac{1}{t}\frac{\Ex|\xi|}{\left(\frac{1}{t}\right)^{\frac{1}{p}}}dt
		\sim \int_1^st^{-1+\frac{1}{p}}dt=\left[t^{\frac{1}{p}}\right]_1^s
		=s^{\frac{1}{p}}-1
		\stackrel{s\geq 1}{\leq}2s^{\frac{1}{p}}.
		\end{split}\right.
	\end{eqnarray*}}
	To estimate $(II)$, we use (\ref{eqn:gauss}) and get
	\begin{eqnarray*}
		\left.\begin{split}
		(II)&\sim\int_1^s\int_{\frac{1}{t}}^{\infty}u^{-\frac{1}{p}}e^{-\frac{u^{\frac{2}{p}}}{2}}dudt
		=\int_1^s\int_{\frac{1}{t^{\frac{1}{p}}}}^{\infty}y^{-1}e^{-\frac{y^{2}}{2}}py^{p-1}dydt
		\sim_p \int_1^s\int_{\frac{1}{t^{\frac{1}{p}}}}^{\infty}y^{p-2}e^{-\frac{y^{2}}{2}}dydt\\
		&\sim\int_1^s\left[-\Gamma\left(\frac{p-1}{2},\frac{y^2}{2}\right)\right]_{\frac{1}{t^{\frac{1}{p}}}}^{\infty}dt
		=\int_1^s\Gamma\left(\frac{p-1}{2},\frac{1}{2t^{\frac{2}{p}}}\right)dt.
		\end{split}\right.
	\end{eqnarray*}
	In general, we have
	$$\int x^{b-1}\Gamma(t,x)dx=\left[\frac{1}{b}\left(x^b\Gamma(t,x)-\Gamma(t+b,x)\right)\right],$$
	see for example \cite{stegun}. With $b=1$ this provides
	\begin{eqnarray*}
		\left.\begin{split}
		(II)
		&\sim\int_1^s\Gamma\left(\frac{p-1}{2},\frac{1}{2t^{\frac{2}{p}}}\right)dt
		=\left[\frac{1}{2t^{\frac{2}{p}}}\Gamma\left(\frac{p-1}{2},\frac{1}{2t^{\frac{2}{p}}}\right)-\Gamma\left(\frac{p-1}{2}+1,\frac{1}{2t^{\frac{2}{p}}}\right)\right]_1^s\\
		&=\frac{1}{2s^{\frac{2}{p}}}\Gamma\left(\frac{p-1}{2},\frac{1}{2s^{\frac{2}{p}}}\right)-\Gamma\left(\frac{p-1}{2}+1,\frac{1}{2s^{\frac{2}{p}}}\right)+c_p\\
		&\stackrel{s\geq 1}{\leq }\Gamma\left(\frac{p-1}{2},\frac{1}{2s^{\frac{2}{p}}}\right)-\Gamma\left(\frac{p-1}{2}+1,\frac{1}{2s^{\frac{2}{p}}}\right)+c_p.
		\end{split}\right.
	\end{eqnarray*}
	Generally by integration by parts we have
	$$\Gamma(t,x)=(t-1)\Gamma(t-1,x)+x^{t-1}e^{-x}.$$
     We apply this for $t=\frac{p-1}{2}+1$ and $x=\frac{1}{2s^{\frac{2}{p}}}$. Since $1<s<\infty$ and $1<p<2$,
	$$0\leq x^{t-1}e^{-x}=\underbrace{\frac{1}{2s^{\frac{2}{p}}}^{\frac{p-1}{2}+1}}_{\leq\left(\frac{1}{2}\right)^{\frac{1-1}{2}+1}}\underbrace{e^{-\frac{1}{2s^{\frac{2}{p}}}}}_{\leq 1}\leq \frac{1}{2}.$$
	Altogether, this yields
	$$\Gamma\left(\frac{p-1}{2}+1,\frac{1}{2s^{\frac{2}{p}}}\right)\sim_p\Gamma\left(\frac{p-1}{2},\frac{1}{2s^{\frac{2}{p}}}\right)+\tilde{c}_p.$$
	Overall, we have
	$$(II)\lesssim_p \bar{c}_p.$$
	Combining the previous, we get
	\begin{eqnarray*}
		\left.\begin{split}
	 	M_{\xi^p}(s)
		& =(a)+\underbrace{(b)}_{(I)+(II)}
		\lesssim_p e^{-\frac{1}{2}}+s^{\frac{1}{p}}-1+\bar{c}_p
		\stackrel{s\geq 1}{\leq} C_p s^{\frac{1}{p}}
		\sim M_{\xi}\left(s^{\frac{1}{p}}\right).
		\end{split}\right.
	\end{eqnarray*}
	Subsumed, we proved for all $s$
	$$M_{\xi^p}(s)\lesssim_p M_{\xi}\left(s^{\frac{1}{p}}\right).$$
	\underline{Lower bound $M_{\xi^p}(s)\gtrsim_p M_{\xi}\left(s^{\frac{1}{p}}\right)$:} \\
	${}$\\
	\underline{Case 1:} Let $s\leq 1$.	
	\begin{eqnarray*}
		\left.\begin{split}
		M_{\xi^p}(s)
		& =\int\limits_0\limits^s \left(\frac{1}{t} \Pv_1\left(|\xi|^p\geq \frac{1}{t}\right) + \int\limits_{\frac{1}{t}}\limits^{\infty} \Pv_1(|\xi|^p \geq u) du \right) dt\\
		&\sim_p\int_0^{s^{\frac{1}{p}}}\left(\frac{1}{x}\Pv_1\left(|\xi|\geq \frac{1}{x}\right)+\underbrace{\int_{\frac{1}{x}}^{\infty}\Pv_1\left(|\xi|\geq y\right)y^{p-1}dy}_{\geq 0}\right)x^{p-1}dx\\
		&\geq\int_0^{s^{\frac{1}{p}}}x^{p-2}\Pv_1\left(|\xi|\geq \frac{1}{x}\right)dx.
		\end{split}\right.
	\end{eqnarray*}
	We have $-1<p-2<0$ and $x\leq 1$, so $1\leq x^{p-2}\leq x^{-1}$ holds and therefore
	$$M_{\xi^p}(s)\gtrsim_p \int_0^{s^{\frac{1}{p}}}x^{p-2}\Pv_1\left(|\xi|\geq \frac{1}{x}\right)dx\geq\int_0^{s^{\frac{1}{p}}}\Pv_1\left(|\xi|\geq \frac{1}{x}\right)dx.$$
	Applying (\ref{eqn:gauss}), we get
	$$M_{\xi^p}(s)\gtrsim_p\int_0^{s^{\frac{1}{p}}}x e^{-\frac{1}{2x^2}}dx=\int_{s^{-\frac{1}{p}}}^{\infty}t^{-3} e^{-\frac{t^2}{2}}dt=\int_{s^{-\frac{1}{p}}}^{\infty}e^{-\frac{t^2}{2}-3\ln(t)}dt\stackrel{\ln(t)\leq\frac{t^2}{3}}\geq\int_{s^{-\frac{1}{p}}}^{\infty}e^{-\frac{3}{2}t^2}dt\sim s^{\frac{1}{p}}e^{-\frac{3}{2s^{\frac{2}{p}}}}.$$
Finally, we proved
	$$M_{\xi^p}(s)\gtrsim_pM_{\xi}\left(s^{\frac{1}{p}}\right).$$
	\underline{Case 2:} Let $s> 1$.
	\begin{eqnarray*}
		\left.\begin{split}
		M_{\xi^p}(s)
		& =\int\limits_0\limits^s \left(\frac{1}{t} \Pv_1\left(|\xi|^p\geq \frac{1}{t}\right) + \int\limits_{\frac{1}{t}}\limits^{\infty} \Pv_1(|\xi|^p \geq u) du \right) dt\\
		&\sim_p\int_0^{s^{\frac{1}{p}}}\left(\frac{1}{x}\Pv_1\left(|\xi|\geq \frac{1}{x}\right)+\int_{\frac{1}{x}}^{\infty}\Pv_1\left(|\xi|\geq y\right)y^{p-1}dy\right)x^{p-1}dx\\
		&=\underbrace{\int_0^{s^{\frac{1}{p}}}\frac{1}{x}\Pv_1\left(|\xi|\geq \frac{1}{x}\right)x^{p-1}dx}_{\geq 0}+\int_0^{s^{\frac{1}{p}}}\int_{\frac{1}{x}}^{\infty}\Pv_1\left(|\xi|\geq y\right)y^{p-1}dyx^{p-1}dx\\
		&\geq\int_0^{s^{\frac{1}{p}}}\int_{\frac{1}{x}}^{\infty}\Pv_1\left(|\xi|\geq y\right)y^{p-1}dyx^{p-1}dx\\
		&\geq\int_0^{s^{\frac{1}{p}}}\int_{\frac{1}{x}}^{\infty}\Pv_1\left(|\xi|\geq y\right)x^{-p+1}dyx^{p-1}dx\\
		&=\int_0^{s^{\frac{1}{p}}}\int_{\frac{1}{x}}^{\infty}\Pv_1\left(|\xi|\geq y\right)dydx\\
		&\geq\int_1^{s^{\frac{1}{p}}}\int_{\frac{1}{x}}^{\infty}\Pv_1\left(|\xi|\geq y\right)dydx\\
		&\geq\int_1^{s^{\frac{1}{p}}}\int_{1}^{\infty}\Pv_1\left(|\xi|\geq y\right)dydx.
		\end{split}\right.
	\end{eqnarray*}
		By (\ref{eqn:gauss}), we get
	$$\int_{1}^{\infty}\Pv_1\left(|\xi|\geq y\right)dy\sim\int_{1}^{\infty}\frac{1}{y}e^{-\frac{y^2}{2}}dy=\int_{1}^{\infty}e^{-\frac{y^2}{2}-\ln(y)}dy\stackrel{\ln(y)\leq \frac{y^2}{2}}{\geq}\int_{1}^{\infty}e^{-y^2}dy\sim e^{-1},$$
	which yields
	$$M_{\xi}(s)\gtrsim_p \int_1^{s^{\frac{1}{p}}}e^{-1}dx\sim s^{\frac{1}{p}}-1\stackrel{s\geq 1}{\sim}s^{\frac{1}{p}}\sim M_{\xi}(s^{\frac{1}{p}}).$$
	altogether, we proved that for all $s$
	$$M_{\xi^p}(s)\gtrsim_p M_{\xi}\left(s^{\frac{1}{p}}\right).$$
	With regard to the previous, we proved for all $s$
	$$M_{\xi^p}(s)\sim_p M_{\xi}\left(s^{\frac{1}{p}}\right),$$
	which concludes the proof.

\end{proof}

\section{Generation of truncated $\ell_p$-norms ($p>1$)}

Since standard gaussian random variables do not generate the $\ell_2$-norm, the question arises what distribution does. We prove that $\log-\gamma_{1,p}$ ($p>1$) distributed random variables generate more or less the $\ell_p$-norm and especially $\log-\gamma_{1,2}$ distributed random variables generate exactly the $\ell_2$-norm.\\

We remind the reader that the density of a $\log-\gamma_{q,p}$ distributed random variable $\xi$ with parameters $q,p>0$ is given by
  $ f_{\xi}(x) = \left\{%
\begin{array}{ll}
    \frac{p^q}{\Gamma(q)}x^{-p-1}(\ln(x))^{q-1}, & \hbox{$x\geq 1$,} \\
    0, & \hbox{$x<1$.} \\
\end{array}%
\right. $

We prove the following theorem.
   
\begin{satz}
  Let $p>1$ and $\xi_1,\ldots,\xi_n$ be i.i.d. copies of a $\log-\gamma_{1,p}$ distributed random variable $\xi$. Then for all $x\in\R^n$
    $$
      \Ex \max_{1\leq i \leq n} \abs{x_i\xi_i} \sim \norm{x}_{M_{\xi}},
    $$
  and
   $M_{\xi}(s) = \left\{%
\begin{array}{ll}
    \frac{1}{p-1} s^p, & \hbox{$s\leq 1$;} \\
    \frac{p}{p-1} s-1, & \hbox{$s>1$.} \\
\end{array}%
\right.$       
\end{satz}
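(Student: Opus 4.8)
The plan is to apply Theorem \ref{thm:orlicz_carsten1} directly. Since $\xi$ is a single $\log-\gamma_{1,p}$ random variable, I first check the hypothesis that its first moment is finite: with $q=1$ the density reduces to $f_\xi(x)=p\,x^{-p-1}$ on $[1,\infty)$ and $0$ otherwise, so $\Ex|\xi|=p\int_1^\infty x^{-p}\,dx=\frac{p}{p-1}<\infty$ precisely because $p>1$. Theorem \ref{thm:orlicz_carsten1} then gives $\Ex\max_i|x_i\xi_i|\sim\norm{x}_{M_\xi}$ with $M_\xi$ defined by (\ref{eqn:m1}), and it remains to evaluate that integral and show it reproduces the claimed piecewise expression. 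In contrast to the Gaussian case of Theorem \ref{thm:p-stabil/gauss}, here no asymptotic estimate is needed: the computation is exact.

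First I would record the tail. Since $\xi\geq 1$ almost surely,
$$\Pv\left(|\xi|\geq t\right)=\begin{cases}1 & t\leq 1,\\ t^{-p} & t>1.\end{cases}$$
Next I would insert this into the integrand of (\ref{eqn:m1}),
$$g(t):=\frac{1}{t}\,\Pv\left(|\xi|\geq \tfrac{1}{t}\right)+\int_{1/t}^{\infty}\Pv\left(|\xi|\geq u\right)\,du,$$
and evaluate it by distinguishing $t<1$ (where $1/t>1$) from $t\geq 1$ (where $1/t\leq 1$). For $t<1$ both contributions are governed by the power law, giving $\frac1t\cdot t^{p}=t^{p-1}$ and $\int_{1/t}^\infty u^{-p}\,du=\frac{t^{p-1}}{p-1}$, so $g(t)=\frac{p}{p-1}\,t^{p-1}$. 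For $t\geq 1$ the point term contributes $1/t$ while the tail integral splits as $\int_{1/t}^{1}1\,du+\int_{1}^{\infty}u^{-p}\,du=(1-\tfrac1t)+\frac{1}{p-1}$; the $1/t$ terms cancel and $g(t)=\frac{p}{p-1}$ is constant.

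Finally I would integrate $M_\xi(s)=\int_0^s g(t)\,dt$. For $s\leq 1$ this is $\frac{p}{p-1}\int_0^s t^{p-1}\,dt=\frac{1}{p-1}s^{p}$, and for $s>1$ it is $\frac{1}{p-1}+\frac{p}{p-1}(s-1)=\frac{p}{p-1}s-1$, which matches the two branches asserted for $M_\xi$. A short check confirms that the two pieces agree at the threshold $s=1$ (both equal $\frac{1}{p-1}$), that $M_\xi(0)=0$, and that $M_\xi$ is convex (with matching one-sided derivatives $\frac{p}{p-1}$ at $s=1$), so it is a genuine Orlicz function and the conclusion follows. There is no substantive obstacle here; the only points demanding care are the reliance on $p>1$ to guarantee convergence of $\int_1^\infty u^{-p}\,du$ and of the first moment, and the correct splitting of the integration regions at $t=1$ (respectively $u=1$), which is what produces the change from the power branch to the linear branch.
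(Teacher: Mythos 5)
Your proposal is correct and follows essentially the same route as the paper's proof: apply Theorem \ref{thm:orlicz_carsten1} and evaluate the integral in (\ref{eqn:m1}) exactly, using the tail $\Pv(|\xi|\geq y)=\min(1,y^{-p})$ and splitting the integration at $t=1$ (the paper integrates the two terms of the integrand separately, whereas you simplify the integrand $g(t)$ first, but the computation is the same and both yield the stated piecewise formula).
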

\begin{proof}
  By Theorem \ref{thm:orlicz_carsten1}, we have
    $$
      \Ex \max_{1\leq i \leq n} \abs{x_i\xi_i} \sim \norm{x}_{M_{\xi}},
    $$
  where
    \begin{equation}\label{EQU Darstellung Orlicz-Funktion}
      M_{\xi}(s) = \int_0^s \frac{1}{t} \Pv\left(|\xi| \geq \frac{1}{t}\right) + \int_{\frac{1}{t}}^{\infty} \Pv(|\xi| \geq u) du ~ dt.
    \end{equation}
  \underline{Case 1:} Let $s\leq 1$. Since we have integration limits $0$ and $s$, $\frac{1}{t}\geq 1$ holds. For all $y\geq 1$
    \begin{equation} \label{EQU Verteilungsfunktion log-gamma}
      \Pv(|\xi| \geq y) = \int_y^{\infty} f_{\xi}(x) dx = \int_y^{\infty} px^{-p-1} dx = [-x^{-p}]_y^{\infty} = y^{-p}. 
    \end{equation}
  Therefore, by (\ref{EQU Verteilungsfunktion log-gamma}) 
    $$
      \int_0^s \frac{1}{t} \Pv\left(|\xi| \geq \frac{1}{t}\right) dt = \int_0^s \frac{1}{t} t^p dt = \frac{s^p}{p}.
    $$
  Furthermore, by (\ref{EQU Verteilungsfunktion log-gamma}) and because $p>1$
    $$
      \int_{\frac{1}{t}}^{\infty} \Pv (|\xi| \geq u) du = \int_{\frac{1}{t}}^{\infty} u^{-p} du = \frac{1}{p-1}t^{p-1}
    $$
  and hence
    $$
      \int_0^s \frac{1}{p-1}t^{p-1} dt = \frac{1}{p(p-1)}s^p.
    $$
  Using the representation (\ref{EQU Darstellung Orlicz-Funktion}), we obtain
    $$
      M_{\xi}(s)=\frac{1}{p}s^p + \frac{1}{p(p-1)}s^p = \frac{1}{p-1}s^p.
    $$
  \underline{Case 2:} Let $s>1$. We first calculate
    $$
      \int_0^s \frac{1}{t} \Pv\left(|\xi| \geq \frac{1}{t}\right) dt.
    $$
  We have
    $$
      \int_0^s \frac{1}{t} \Pv\left(|\xi| \geq \frac{1}{t}\right) dt = 
      \underbrace{\int_0^1 \frac{1}{t} \Pv\left(|\xi| \geq \frac{1}{t}\right) dt}_{(I)} + \underbrace{\int_1^s \frac{1}{t} \Pv\left(|\xi| \geq \frac{1}{t}\right) dt}_{(II)}.
    $$
  In $(I)$, we have $\frac{1}{t} \geq 1$ and therefore (\ref{EQU Verteilungsfunktion log-gamma}) applies and we obtain
    $$
     \Pv\left(|\xi| \geq \frac{1}{t}\right) = t^p.
    $$
  Hence
    $$
      (I) = \int_0^1 \frac{1}{t}t^p dt = \left[\frac{1}{p}t^p\right]_0^1 = \frac{1}{p}.
    $$      
  In $(II)$, we have $\frac{1}{t} \leq 1$ and therefore
    $$
      \Pv\left(|\xi| \geq \frac{1}{t}\right) = \int_{\frac{1}{t}}^{\infty} f_{\xi}(x) dx = \int_{1}^{\infty} f_{\xi}(x) dx
      = \int_1^{\infty} px^{-p-1} dx = [-x^{-p}]_1^{\infty} = 1. 
    $$       
  So we obtain
    $$
      (II) = \int_1^s \frac{1}{t} dt = \ln(s).
    $$
  Therefore
    $$
      \int_0^s \frac{1}{t} \Pv\left(|\xi| \geq \frac{1}{t}\right) dt = \ln(s) + \frac{1}{p}.
    $$
  We now calculate
    $$
      \int_0^s \int_{1/t}^{\infty} \Pv(|\xi| \geq u) du ~ dt.
    $$
  Again, we have
    $$
      \int_0^s \int_{1/t}^{\infty} \Pv(|\xi| \geq u) du ~ dt = 
      \underbrace{\int_0^1 \int_{1/t}^{\infty} \Pv(|\xi| \geq u) du ~ dt}_{(III)} + 
      \underbrace{\int_1^s \int_{1/t}^{\infty} \Pv(|\xi| \geq u) du ~ dt}_{(IV)}.
    $$
  In part $(III)$, we have $\frac{1}{t} \geq 1$ and hence by (\ref{EQU Verteilungsfunktion log-gamma})
    $$
      \Pv(|\xi| \geq u) = u^{-p}.
    $$
  So we obtain 
    $$
      \int_{\frac{1}{t}}^{\infty} u^{-p} = \frac{1}{p-1}t^{p-1}. 
    $$
  Therefore
    $$
      (III) = \int_0^1 \int_{1/t}^{\infty} \Pv(|\xi| \geq u) du ~ dt = \int_0^1 \frac{1}{p-1}t^{p-1} dt = \frac{1}{p(p-1)}.
    $$
  In part $(IV)$, we have $\frac{1}{t} \leq 1$, so we get
    $$
      \int_{\frac{1}{t}}^{\infty} \Pv(|\xi| \geq u) du =
      \underbrace{\int_{\frac{1}{t}}^{1} \Pv(|\xi| \geq u) du}_{(IV.1)}
      + \underbrace{\int_{1}^{\infty} \Pv(|\xi| \geq u) du}_{(IV.2)}. 
    $$
  Since in $(IV.1)$ we have $u\leq 1$, we obtain
    $$
      (IV.1) = \int_{\frac{1}{t}}^1 \int_u^{\infty} f_{\xi}(x) dx ~ du = \int_{\frac{1}{t}}^1 \underbrace{\int_1^{\infty} f_{\xi}(x) dx}_{=1} ~ du 
      = \int_{\frac{1}{t}}^1 1 du = 1-\frac{1}{t}.
    $$
  In $(IV.2)$, we have $u\geq 1$ and therefore by (\ref{EQU Verteilungsfunktion log-gamma})
    $$
      \Pv(|\xi| \geq u) = u^{-p}.
    $$
  Hence
    $$
      (IV.2) = \int_1^{\infty} u^{-p} du = \frac{1}{p-1}.
    $$
  So
    $$
      (IV) = \int_1^s \int_{1/t}^{\infty} \Pv(|\xi| \geq u) du ~ dt = \int_1^s 1-\frac{1}{t} + \frac{1}{p-1} dt
      = \frac{p}{p-1}(s-1) - \ln(s).
    $$
  Altogether, we have
    $$
      M_{\xi}(s) = \ln(s)+\frac{1}{p} + \frac{1}{p(p-1)} + \frac{p}{p-1}(s-1) - \ln(s),
    $$
  i.e. for $s>1$ we have $M_{\xi}(s)=\frac{p}{p-1}s - 1$.                                          
\end{proof}

An Orlicz norm $\norm{\cdot}_M$ is uniquely determined on the interval $[0,s_0]$ where $M(s_0)=1$. Therefore, we obtain the following interesting corollary.

\begin{cor}
  Let $\xi_1,\ldots,\xi_n$ be i.i.d. copies of a $\log-\gamma_{1,2}$ distributed random variable $\xi$. Then, for all $x\in\R^n$,
    $$
      \Ex \max_{1\leq i \leq n} \abs{x_i\xi_i} \sim \norm{x}_2.
    $$  
\end{cor}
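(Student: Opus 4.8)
The plan is to specialize the theorem just proved to the parameter $p=2$ and then to invoke the remark, stated immediately before the corollary, that an Orlicz norm is already determined by the values of its Orlicz function up to the level where that function reaches $1$. Substituting $p=2$ into the theorem, the generating Orlicz function becomes $M_{\xi}(s)=\frac{1}{p-1}s^p=s^2$ for $s\leq 1$ and $M_{\xi}(s)=\frac{p}{p-1}s-1=2s-1$ for $s>1$, and the theorem directly yields $\Ex\max_{1\leq i\leq n}\abs{x_i\xi_i}\sim\norm{x}_{M_{\xi}}$. Thus the entire task reduces to identifying $\norm{\cdot}_{M_{\xi}}$ with the Euclidean norm $\norm{\cdot}_2$.

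First I would record that the function $N(s)=s^2$ generates exactly the Euclidean norm: from the definition of the Orlicz norm, $\norm{x}_N=\inf\{t>0:\sum_{i}(\abs{x_i}/t)^2\leq 1\}=(\sum_i x_i^2)^{1/2}=\norm{x}_2$. Next I would observe that $M_{\xi}$ and $N$ agree on the interval $[0,1]$, since $M_{\xi}(s)=s^2=N(s)$ there, and that both functions first attain the value $1$ at the same point, namely $s_0=1$, because $M_{\xi}(1)=1=N(1)$. This is the only elementary computation the argument requires, and it is precisely where the special value $p=2$ enters: at $p=2$ the exponent $\frac{1}{p-1}=1$ makes the leading coefficient on $[0,1]$ equal to one, which is what forces $s_0=1$ and guarantees that $M_{\xi}$ coincides with $N$ rather than merely being comparable to it.

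By the remark preceding the corollary, the Orlicz norm $\norm{\cdot}_M$ is uniquely determined by the restriction of $M$ to $[0,s_0]$, where $M(s_0)=1$ (intuitively, at the optimal $t$ in the infimum each summand satisfies $M(\abs{x_i}/t)\leq 1$, hence $\abs{x_i}/t\leq s_0$, so only the values of $M$ on $[0,s_0]$ are ever used). Since $M_{\xi}$ and $N$ share the same $s_0=1$ and coincide on $[0,1]$, their Orlicz norms are literally identical, i.e. $\norm{x}_{M_{\xi}}=\norm{x}_2$ for every $x\in\R^n$. Combining this equality with the equivalence supplied by the theorem gives $\Ex\max_{1\leq i\leq n}\abs{x_i\xi_i}\sim\norm{x}_{M_{\xi}}=\norm{x}_2$, which is the assertion. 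There is essentially no obstacle beyond verifying the equality $M_{\xi}(1)=1$ that pins down $s_0$, since the genuine analytic work has already been carried out in the theorem; the corollary is an immediate extraction of the case $p=2$.
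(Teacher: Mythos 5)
Your argument is exactly the one the paper intends: specialize the theorem to $p=2$ so that $M_{\xi}(s)=s^2$ on $[0,1]$ with $M_{\xi}(1)=1$, and invoke the preceding remark that an Orlicz norm is determined by $M$ on $[0,s_0]$ with $M(s_0)=1$ to conclude $\norm{\cdot}_{M_{\xi}}=\norm{\cdot}_2$. The proposal is correct and matches the paper's (implicit) proof; your added justification of why $p=2$ is the special value is a welcome clarification.
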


In fact, this is interesting since one would assume standard gaussians to generate the $\ell_2$-norm. In fact, the norm generated by Gaussians is far from being the $\ell_2$-norm.\\
${}$\\

Naturally now the question arises, can we prove Theorem \ref{thm:p/q-stabil} and Theorem \ref{thm:p-stabil/gauss} also in case that $p=2$, this means in the case that the random variables $\xi_i$, $i=1,...,n$, are independent $\log-\gamma_{1,2}$ distributed. We can do so, as provided in the following.

\begin{satz}\label{thm:p/2-stabil} Let $p\in(1,2)$, let $\xi_i$, $i=1,...,n$ be independent copies of a $\log-\gamma_{1,2}$ distributed random variable $\xi$ on $(\Omega_1,\mathfrak{A}_1,\Pv_1)$ and let $\eta_j$, $j=1,...,m$ be independent p-stable copies of a random variable $\eta$ on $(\Omega_2,\mathfrak{A}_2,\Pv_2)$. Then, for all $(a_{ij})_{i,j}\in\R^{n\times m}$, 
	$$\Ex_{\Omega_1}\Ex_{\Omega_2}\max\limits_{i,j}\left|a_{ij}\eta_j(\omega_2)\xi_i({\omega_1})\right|\sim_{p}\norm{\left(\norm{\left(a_{ij}\right)_{i=1}^n}_2\right)_{j=1}^m}_p.$$
\end{satz}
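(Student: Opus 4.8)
The plan is to follow verbatim the scaffold of the proofs of Theorem \ref{thm:p/q-stabil} and Theorem \ref{thm:p-stabil/gauss}, replacing only the final identification of the Orlicz function $M_{\xi^p}$. First I would average over $\Omega_2$ using the $p$-stable estimate (\ref{eqn:p-stabil}) for the $\eta_j$, which reduces the quantity to $\Ex_{\Omega_1}\norm{(\max_{i}\abs{a_{ij}\xi_i(\omega_1)})_{j=1}^m}_p$. Then, exactly as before, the triangle (Minkowski) inequality pulls $\Ex_{\Omega_1}$ inside the outer $\ell_p$-norm to produce the lower bound $\norm{(\Ex_{\Omega_1}\max_i\abs{a_{ij}\xi_i})_{j=1}^m}_p$, while Jensen's inequality applied to $t\mapsto t^{1/p}$ produces the upper bound $\norm{((\Ex_{\Omega_1}\max_i\abs{a_{ij}^p\xi_i^p})^{1/p})_{j=1}^m}_p$. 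Applying Theorem \ref{thm:orlicz_carsten1} to $\xi$ and to $\xi^p$ respectively turns these into $\norm{(\norm{(a_{ij})_i}_{M_\xi})_j}_p$ and $\norm{((\norm{(a_{ij}^p)_i}_{M_{\xi^p}})^{1/p})_j}_p$, so the whole proof comes down to showing that both inner Orlicz norms are equivalent, up to $p$-dependent constants, to $\norm{(a_{ij})_i}_2$.

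The lower bound is immediate: since $\xi$ is $\log-\gamma_{1,2}$ distributed, the corollary above states precisely that $M_\xi$ generates the $\ell_2$-norm, so $\norm{(a_{ij})_i}_{M_\xi}\sim\norm{(a_{ij})_i}_2$ and the lower bound already reads $\gtrsim\norm{(\norm{(a_{ij})_i}_2)_j}_p$. For the upper bound I would, as in the two previous proofs, use the composition identity $\norm{(a_{ij}^p)_i}_{M_{\xi^p}}^{1/p}=\norm{(a_{ij})_i}_{M_{\xi^p}\circ^p}$ and reduce everything to proving $M_{\xi^p}(s^p)\sim_p s^2$, equivalently $M_{\xi^p}(s)\sim_p s^{2/p}$, on the interval determining the Orlicz norm.

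The one genuinely new step is the identification of $M_{\xi^p}$, and here the $\log-\gamma$ structure makes life easy. Passing to $\ln\xi$, the distribution $\log-\gamma_{1,2}$ means $\ln\xi$ is exponential of rate $2$; hence $\ln(\xi^p)=p\ln\xi$ is exponential of rate $2/p$, so $\xi^p$ is again $\log-\gamma_{1,2/p}$ distributed, and crucially $2/p\in(1,2)$. Therefore the preceding theorem on $\log-\gamma_{1,r}$ variables (with $r=2/p>1$) applies directly to $\xi^p$ and yields $M_{\xi^p}(s)=\frac{1}{2/p-1}\,s^{2/p}$ for $s\leq1$ and $M_{\xi^p}(s)=\frac{2/p}{2/p-1}\,s-1$ for $s>1$. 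Since $M_{\xi^p}$ already attains the value $1$ at some $s_0<1$, the Orlicz norm is governed entirely by the branch $s\leq1$, on which $M_{\xi^p}(s)\sim_p s^{2/p}$; composing with $\,^p$ gives $M_{\xi^p}(s^p)\sim_p s^2$ on the relevant interval, and since $s\mapsto s^2$ generates $\ell_2$ this yields $\norm{(a_{ij}^p)_i}_{M_{\xi^p}}^{1/p}\sim_p\norm{(a_{ij})_i}_2$, matching the lower bound. If one prefers to avoid the distributional identity, the same formula for $M_{\xi^p}$ follows by inserting the explicit tail $\Pv(\abs{\xi}^p\geq u)=\min(1,u^{-2/p})$ into (\ref{eqn:m^p}) and splitting the $t$-integral at $1$ exactly as in the $\log-\gamma$ theorem. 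I expect essentially no obstacle beyond this identification: because the tails are pure power laws, this case is markedly cleaner than the Gaussian Theorem \ref{thm:p-stabil/gauss}, requiring no incomplete-Gamma estimates; the only point needing a word of care is the standard remark, already invoked for the corollary, that two Orlicz functions agreeing up to constants on $[0,s_0]$ generate equivalent norms.
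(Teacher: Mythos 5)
Your proposal is correct and follows essentially the same route as the paper: reduce via the $p$-stable averaging, triangle and Jensen inequalities to showing $M_{\xi^p}(s)\sim_p s^{2/p}$ on the interval where the Orlicz norm is determined, and conclude from $M_{\xi^p}(1)=\frac{p}{2-p}>1$ that the branch $s\leq 1$ suffices. The only (harmless) variation is that you obtain $M_{\xi^p}(s)=\frac{p}{2-p}s^{2/p}$ for $s\leq1$ by observing that $\xi^p$ is again $\log$-$\gamma_{1,2/p}$ distributed and quoting the earlier theorem, whereas the paper computes the same integral directly from the tail $\Pv(|\xi|\geq y)=y^{-2}$.
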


\begin{proof}
  We follow the proof of Theorem \ref{thm:p/q-stabil}. Therefore we have
    $$
      \Ex_{\Omega_1}\Ex_{\Omega_2} \max_{1\leq i \leq n} \max_{1\leq j \leq n } \abs{a_{ij} \xi_i(\omega_1) \eta_j(\omega_2)}
      \sim
      \Ex_{\Omega_1} \norm{\left(\max_{1\leq i\leq n}\abs{a_{ij}\xi_i(\omega_1)}\right)_{j=1}^n}_p.
    $$
  As before, we have to show that $M_{\xi^p}(s) \sim M_{\xi}(s^{1/p})=s^{2/p}$. We calculate $M_{\xi^p}(s)$ and start with $s\leq 1$. 
  Since for all $y\geq 1$
    $$
      \Pv(\abs{\xi} \geq y) = \int_y^{\infty} f_{\xi}(x) dx = y^{-2},  
    $$
  we obtain  
    \begin{eqnarray*}
      M_{\xi^p}(s) & = & \int_0^s \frac{1}{t} \Pv(\abs{\xi} \geq t^{-1/p}) + 
      \int_{\frac{1}{t}}^{\infty} \Pv(\abs{\xi}\geq u^{1/p}) du ~dt \\
      & = & \int_0^s t^{2/p-1} +  \left[-\frac{1}{2/p-1}u^{-2/p+1}\right]_{\frac{1}{t}}^{\infty} dt \\
      & = & \frac{2}{2-p}\int_0^s t^{2/p-1} dt \\
      & = & \frac{p}{2-p}s^{2/p}.
    \end{eqnarray*}
  So for all $s\leq 1$ we have $M_{\xi^p}(s) = \frac{p}{2-p}s^{2/p} = \frac{p}{2-p}M_{\xi}(s^{1/p})$. Since $\frac{p}{2-p}>1$, the case 
  $0\leq s \leq 1$ suffices because $M_{\xi^p}(1)>1$ and therefore the Orlicz norm $\norm{\cdot}_{M_{\xi^p}}$ is uniquely determined on 
  this interval.   
\end{proof}

\begin{satz}\label{thm:2-stabil/gauss} Let $\xi_i$, $i=1,...,n$ be independent copies of a $\log-\gamma_{1,2}$ distributed random variable $\xi$ on $(\Omega_1,\mathfrak{A}_1,\Pv_1)$ and let $\eta_j$, $j=1,...,m$ be independent copies of a standard gaussian random variable $\eta$ on $(\Omega_2,\mathfrak{A}_2,\Pv_2)$. Then, for all $(a_{ij})\in\R^{n\times m}$, 
	$$\Ex_{\Omega_1}\Ex_{\Omega_2}\max\limits_{i,j}\left|a_{ij}\eta_j(\omega_2)\xi_i({\omega_1})\right|\sim\norm{\left(\norm{\left(a_{ij}\right)_{i=1}^n}_{M_{\xi}}\right)_{j=1}^m}_2,$$
	where $\norm{\cdot}_{M_{\xi}}$ denotes again the Orlicz norm given by the Orlicz function
	\begin{equation}\label{eqn:m2}
	  M_{\xi}(s) = 
	   \left\{ 
	    \begin{array}{ll}
	                 0 & ,\hbox{if}\quad s=0  \\
	                 e^{-\frac{3}{2s^2}} & ,\hbox{if}\quad s\in (0,1)\\
	                 e^{-\frac{3}{2}}(3s-2)  & ,\hbox{if}\quad s\geq 1.
	    \end{array} 
	   \right.
	 \end{equation}
\end{satz}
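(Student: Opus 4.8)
The plan is to run the argument of Theorem~\ref{thm:p-stabil/gauss} again, reading the present statement as its boundary case $p=2$: the inner variable is still a standard Gaussian (so $M_\xi$ is again the Orlicz function of (\ref{eqn:m1}) generated by the Gaussian, i.e.\ the one displayed in the statement), while the outer $p$-stable family is replaced by the $\log-\gamma_{1,2}$ family. This replacement is exactly what makes $p=2$ legitimate, since $\log-\gamma_{1,2}$ variables generate the $\ell_2$-norm (the Corollary above), whereas no genuine $2$-stable law is available. Concretely, I would first integrate out the $\log-\gamma_{1,2}$ variables: writing $b_j(\omega_1)=\max_{1\le i\le n}\abs{a_{ij}\xi_i(\omega_1)}$, the Corollary gives $\Ex_{\Omega_2}\max_j\abs{b_j(\omega_1)\eta_j(\omega_2)}\sim\norm{(b_j(\omega_1))_{j=1}^m}_2$, hence
$$\Ex_{\Omega_1}\Ex_{\Omega_2}\max_{i,j}\abs{a_{ij}\eta_j(\omega_2)\xi_i(\omega_1)}\sim\Ex_{\Omega_1}\norm{\left(\max_{1\le i\le n}\abs{a_{ij}\xi_i(\omega_1)}\right)_{j=1}^m}_2 .$$

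Next I would produce the two matching bounds exactly as in the earlier proofs. For the lower bound the triangle inequality moves $\Ex_{\Omega_1}$ inside the $\ell_2$-norm, and Theorem~\ref{thm:orlicz_carsten1} applied to the Gaussians gives
$$\Ex_{\Omega_1}\Ex_{\Omega_2}\max_{i,j}\abs{a_{ij}\eta_j\xi_i}\gtrsim\norm{\left(\Ex_{\Omega_1}\max_{1\le i\le n}\abs{a_{ij}\xi_i}\right)_{j=1}^m}_2\sim\norm{\left(\norm{(a_{ij})_{i=1}^n}_{M_\xi}\right)_{j=1}^m}_2 .$$
For the upper bound, Jensen's inequality for the concave $\sqrt{\;\cdot\;}$ together with Theorem~\ref{thm:orlicz_carsten1} applied to $\xi^2$ gives
$$\Ex_{\Omega_1}\Ex_{\Omega_2}\max_{i,j}\abs{a_{ij}\eta_j\xi_i}\lesssim\norm{\left(\left(\Ex_{\Omega_1}\max_{1\le i\le n}\abs{a_{ij}^2\xi_i^2}\right)^{1/2}\right)_{j=1}^m}_2\sim\norm{\left(\norm{(a_{ij}^2)_{i=1}^n}_{M_{\xi^2}}^{1/2}\right)_{j=1}^m}_2 .$$

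It then remains, just as in Theorems~\ref{thm:p/q-stabil} and~\ref{thm:p-stabil/gauss}, to reconcile the two bounds, i.e.\ to show $\norm{(a_{ij}^2)_{i}}_{M_{\xi^2}}^{1/2}\sim\norm{(a_{ij})_{i}}_{M_\xi}$, which amounts to the single Orlicz-function relation $M_{\xi^2}(s)\sim M_\xi(s^{1/2})$ (equivalently $M_{\xi^2}(s^2)\sim M_\xi(s)$). I would prove this by carrying out the computation from the proof of Theorem~\ref{thm:p-stabil/gauss} at $p=2$: split into $s\le1$ and $s>1$, insert the Gaussian tail (\ref{eqn:gauss}) into the defining integral for $\xi^2$, estimate the inner integral for small $s$, and use Markov's inequality together with the incomplete-Gamma identities for large $s$.

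The main obstacle is precisely this boundary computation. First one checks that the few places where $1<p<2$ was used strictly survive at $p=2$: the bound $y^{p-2}\le1$ in part $(I)$ now holds with equality ($y^{0}=1$), and $-p-1=-3<-2$ as required, so both $s\le1$ computations go through unchanged, while the $s>1$ Markov and incomplete-Gamma steps only need $1<p\le2$. The delicate point is the Orlicz bookkeeping: the estimates pin down $M_{\xi^2}$ only up to a polynomial factor and up to the constant in the exponent — one finds $M_{\xi^2}(s)\sim s^{3/2}e^{-1/(2s)}$ for $s\le1$ and $M_{\xi^2}(s)\sim s^{1/2}$ for $s>1$, which disagree pointwise with $M_\xi(s^{1/2})=e^{-3/(2s)}$ — so one must invoke that such discrepancies do not affect the generated Orlicz norm: for a function of faster-than-polynomial decay at $0$, a polynomial prefactor and the precise constant in the exponent are both absorbed by an $O(1)$ rescaling of the argument, which changes the norm only by a constant factor. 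Finally, unlike in Theorem~\ref{thm:p/2-stabil}, the shortcut of determining the norm on $[0,1]$ alone is unavailable here, because $M_{\xi^2}(1)<1$; hence both the small-$s$ and the large-$s$ regime genuinely have to be treated.
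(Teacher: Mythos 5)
Your proposal is correct and is exactly the paper's route: the paper's entire proof of this theorem is the single sentence that it works as in Theorem~\ref{thm:p-stabil/gauss}, and you have spelled out precisely the two points that need checking when that argument is run at $p=2$ with the $\log-\gamma_{1,2}$ family outside (the inequalities that used $p<2$ strictly, and the fact that the computation pins down $M_{\xi^2}$ only up to polynomial prefactors and exponent constants, which is harmless at the level of the generated Orlicz norms). One small inaccuracy: for $s>1$ one actually has $M_{\xi^2}(s)\sim s$ rather than $s^{1/2}$, since $M_{\xi^2}$ is asymptotically linear with slope $\Ex\xi^2<\infty$ (the paper's own estimate $(II)\lesssim_p \bar{c}_p$ overlooks the contribution of $u\in[1/t,1]$ to the inner integral, where the Gaussian tail is of order one); this is harmless for exactly the reason you give, namely that the Orlicz norm only sees $M_{\xi^2}$ on the bounded interval where it does not exceed $1$.
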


The proof works exactly in the same way as the proof of Theorem \ref{thm:p-stabil/gauss}. \\

~\\
~\\
~\\
~\\
{\bf Joscha Prochno}\\
Mathematisches Seminar\\
Christian-Albrechts-Universit\"at zu Kiel\\
Ludewig Meyn Str. 4\\
24098 Kiel, Germany\\
{\em prochno@math.uni-kiel.de}\\
and\\
Department of Mathematical and Statistical Sciences\\
University of Alberta\\
605 Central Academic Building\\
Edmonton, Alberta\\
Canada T6G 2G1\\
{\em prochno@ualberta.ca}\\
~\\
~\\
{\bf Stiene Riemer}\\
Mathematisches Seminar\\
Christian-Albrechts-Universit\"at zu Kiel\\
Ludewig Meyn Str. 4\\
24098 Kiel, Germany\\
{\em riemer@math.uni-kiel.de}\\




\end{document}